\documentclass[11pt,reqno]{amsart}
\usepackage{amsmath}
\usepackage{amsthm}
\usepackage{amsfonts}
\usepackage{amssymb}
\usepackage{bm}
\usepackage{tikz}
\usepackage{float}
\usepackage[all]{xy}
\usepackage{hyperref}
\usepackage{enumerate}
\usepackage{enumitem}
\usepackage[english]{babel}
\usepackage{mathtools}


\newcommand{\op}{\ensuremath{^{\mathrm{op}}}}

\newcommand{\bZ}{\mathbb{Z}}

%
%
\newcommand{\cA}{{\mathcal A}}
\newcommand{\cB}{{\mathcal B}}
\newcommand{\cC}{{\mathcal C}}

\newcommand{\cG}{{\mathcal G}}

\newcommand{\cP}{{\mathcal P}}
\newcommand{\cR}{{\mathcal R}}
\newcommand{\cS}{{\mathcal S}}

\newcommand{\cV}{{\mathcal V}}

\newcommand{\cX}{{\mathcal X}}

%
%

%
%

\newcommand{\md}{\operatorname{mod}}
\newcommand{\Md}{\operatorname{Mod}}
\newcommand{\add}{\operatorname{add}}
\newcommand{\im}{\operatorname{im}}

\newcommand{\Proj}{\operatorname{Proj}}
\newcommand{\Gproj}{\cG\cP}

%
%

\newcommand{\emphbf}[1]{\emph{\textbf{#1}}}
%
%

\newcommand{\pdim}{\operatorname{proj.dim}}
\newcommand{\idim}{\operatorname{inj.dim}}

\newcommand{\domdim}{\operatorname{dom.dim}}

%
%

%
%

\newcommand{\Hom}{\operatorname{Hom}}

\newcommand{\Ext}{\operatorname{Ext}}

%
%

\newcommand{\Ker}{\operatorname{Ker}}
\newcommand{\Coker}{\operatorname{Coker}}

\usetikzlibrary{matrix,arrows}
\title{Co-Gorenstein algebras}

\date{\today}

\author{Sondre Kvamme}
\address{(Kvamme) Laboratoire de Math\'ematiques d'Orsay, Univ. Paris-Sud, CNRS, Universit\'e Paris-Saclay, 91405 Orsay, France}
\email{sondre.kvamme@u-psud.fr}

\author{Ren\'{e} Marczinzik}
\address{(Marczinzik) Institute of algebra and number theory, University of Stuttgart, Pfaffenwaldring 57, 70569 Stuttgart, Germany}
\email{marczire@mathematik.uni-stuttgart.de}

\begin{document}

\newtheorem{Theorem}[equation]{Theorem}
\newtheorem{Lemma}[equation]{Lemma}
\newtheorem{Corollary}[equation]{Corollary}
\newtheorem{Proposition}[equation]{Proposition}
\newtheorem{Conjecture}[equation]{Conjecture}

\theoremstyle{definition}
\newtheorem{Definition}[equation]{Definition}
\newtheorem{Example}[equation]{Example}
\newtheorem{Remark}[equation]{Remark}
\newtheorem{Setting}[equation]{Setting}

\thanks{The authors thank Henning Krause and Steffen Koenig for helpful comments. We would also like to thank the referee for helpful comments and suggestions. The first author was supported by a public grant as part of the FMJH}

\subjclass[2010]{16G10, 16E65;}

\begin{abstract}
We review the theory of Co-Gorenstein algebras, which was introduced in \cite{Bel00}. We show a connection between Co-Gorenstein algebras and the Nakayama and Generalized Nakayama conjecture.
\end{abstract}

\maketitle

\setcounter{tocdepth}{2}

Fix a commutative artinian ring $R$ and an artin $R$-algebra $\Lambda$. Let $\md\text{-}\Lambda$ be the category of finitely generated right $\Lambda$-modules, and let
\[
D(-):= \Hom_R(-,I)\colon (\md\text{-}\Lambda)\op \to \md\text{-}\Lambda\op
\]
denote the equivalence where $I$ is the injective envelope of $S_1\oplus S_2\oplus \cdots \oplus S_n$ and $S_1,S_2,\cdots ,S_n$ is a complete set of representatives of isomorphism classes of simple $R$-modules.   Let 
\[
\cdots \to P_{1}(D\Lambda)\to P_0(D\Lambda)\to D\Lambda\to 0
\] 
be a minimal projective resolution of right $\Lambda$-modules and let  
\[
0\to \Lambda\to I_0(\Lambda)\to I_1(\Lambda)\to \cdots
\] 
be a minimal injective resolution of $\Lambda$ as a right module. Recall that the \emphbf{dominant dimension} $\domdim \Lambda$ of $\Lambda$ is the smallest integer $d$ such that $I_d(\Lambda)$ is not projective. We write  $\domdim \Lambda=\infty$ if no such integer exists. The following conjectures are important in the representation theory of artin algebras.

\begin{enumerate}
\item \emphbf{Generalized Nakayama Conjecture (GNC)}: If $P$ is an indecomposable projective right $\Lambda$-module, then $P$ is a summand of $P_n(D\Lambda)$ for some $n$;
\item \emphbf{Nakayama Conjecture (NC)}: If $\domdim \Lambda =\infty$, then $\Lambda$ is selfinjective.
\end{enumerate}

Since $\domdim \Lambda =\infty$ if and only if $P_i(D\Lambda)$ is injective for all $i\geq 0$ by \cite{Hos89}, it follows that \emphbf{GNC} implies \emphbf{NC}.

In this note we show a relation between these conjectures and the notion of a Co-Gorenstein algebra, which was introduced by Beligiannis in \cite{Bel00}. More precisely, we show that there exist implications
\begin{multline*}
\text{\emphbf{GNC}} \xRightarrow{\text{Proposition } \ref{finite injective dimension}} \text{\emphbf{Conjecture} }\ref{Conjecture} \xRightarrow{\text{Proposition } \ref{lemma implies Nakayama lemma}} \text{\emphbf{NC}}. 
\end{multline*} 
where \emphbf{Conjecture} \ref{Conjecture} is as follows:

\begin{Conjecture}\label{Conjecture}
If $\Omega^n(\md\text{-}\Lambda)$ is extension closed for all $n\geq 1$, then $\Lambda$ is right Co-Gorenstein.\footnote{This is Lemma 6.19 part (3) in \cite{Bel00}. Beligiannis claims that it follows immediately from results in \cite{AR94}. However, this is not clear to the authors, so we state it as a conjecture.}
\end{Conjecture}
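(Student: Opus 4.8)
The plan is to route the syzygy hypothesis through the Auslander--Bridger theory of torsionfree modules together with the grade estimates of \cite{AR94}, and then to reconcile the resulting asymptotic condition with the definition of right Co-Gorenstein. First I would recall the comparison between the $n$-th syzygy category $\Omega^n(\md\text{-}\Lambda)$ and the category $\cT_n$ of $n$-torsionfree modules, i.e. those $M$ with $\Ext^i_{\Lambda\op}(\operatorname{Tr} M,\Lambda)=0$ for $1\le i\le n$. One always has $\cT_n\subseteq \Omega^n(\md\text{-}\Lambda)$, and the key point to establish is that $\Omega^n(\md\text{-}\Lambda)$ is extension closed exactly when this inclusion is an equality. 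Granting this, the hypothesis becomes $\Omega^n(\md\text{-}\Lambda)=\cT_n$ for every $n\ge 1$, which by the standard grade computations of \cite{AR94} should in turn be equivalent to the Auslander condition in every degree, namely $\grade \Ext^i_\Lambda(M,\Lambda)\ge i$ for all finitely generated $M$ and all $i\ge 1$.

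Next I would translate this family of grade inequalities, using Auslander's left--right symmetry, into a statement about the minimal injective coresolution $0\to\Lambda\to I^0\to I^1\to\cdots$ of $\Lambda$: the degree-$\le n$ grade condition is equivalent to the bound $\pdim I^i\le i$ for $i<n$, so the full hypothesis should yield $\pdim I^i\le i$ for all $i$. The remaining step is to feed these degreewise bounds into the definition of right Co-Gorenstein from \cite{Bel00}, which is phrased in terms of the cosyzygies of $\Lambda$ and the behaviour of Gorenstein injective modules under the duality $D$, and to verify that the bounds assemble into the complete (co)resolution data that definition requires.

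The hard part, and the reason the statement is posed as a conjecture, is precisely this last matching step. The results of \cite{AR94} are fundamentally degreewise: for each fixed $n$ they equate extension-closedness of $\Omega^n(\md\text{-}\Lambda)$ with a condition ``up to degree $n$,'' whereas right Co-Gorenstein is a genuinely infinite, stability condition---requiring that cosyzygies eventually become and remain Gorenstein injective, or equivalently that a certain coresolution dimension be finite. Possessing the grade and projective-dimension bounds in every finite degree does not obviously furnish this uniform structure, and passing from ``for all $n$'' to the infinite conclusion may well require an extra finiteness input, such as the finite injective dimension that \textbf{GNC} supplies through Proposition~\ref{finite injective dimension}. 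Pinning down whether such an input is genuinely necessary, or can instead be extracted from extension-closedness alone, is the crux that Beligiannis's ``immediate'' appeal to \cite{AR94} leaves unaddressed.
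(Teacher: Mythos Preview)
The paper does not prove this statement; it is stated as an open conjecture, and the surrounding results (Propositions~\ref{lemma implies Nakayama lemma} and~\ref{finite injective dimension}) only place it between \textbf{GNC} and \textbf{NC}. Your first two paragraphs correctly reproduce the Auslander--Reiten translation that the paper records in Theorems~\ref{When d-syzygies are extension closed} and~\ref{Left-right symmetric}: extension-closedness of $\Omega^n(\md\text{-}\Lambda)$ for all $n$ is equivalent to $\idim P_k(D\Lambda)\le k+1$ for all $k$, and (after the symmetry step) to $\pdim I_k(\Lambda)\le k$ for all $k$. You are also right that the obstacle is the passage from these degreewise bounds to the global conclusion.

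The genuine gap in your proposal is a misidentification of the target. In this paper ``right Co-Gorenstein'' is \emph{not} a statement about cosyzygies of $\Lambda$ eventually becoming Gorenstein injective, nor about any finite coresolution dimension. By Definition~\ref{Definition right Co-Gorenstein} and Corollary~\ref{Alternative description Co-Gorenstein} it is exactly the condition
\[
\Omega^{\infty}(\md\text{-}\Lambda)\subseteq {}^{\perp}\Lambda,
\]
equivalently $\Omega^{\infty}(\md\text{-}\Lambda)=\Gproj(\md\text{-}\Lambda)$: every module admitting an acyclic right resolution by projectives must be Gorenstein \emph{projective}. So the step you call ``the last matching step'' is not about assembling a complete coresolution or invoking Gorenstein injectives; it is the concrete implication
\[
\bigl(\pdim I_k(\Lambda)\le k\ \text{for all }k\bigr)\ \Longrightarrow\ \bigl(M\in\Omega^{\infty}(\md\text{-}\Lambda)\Rightarrow \Ext^i_{\Lambda}(M,\Lambda)=0\ \text{for all }i\ge 1\bigr).
\]
As the paper's Remark after Proposition~\ref{finite injective dimension} notes, under the hypothesis one has $\Omega^{\infty}(\md\text{-}\Lambda)=\bigcap_{n\ge 1}\Omega^n(\md\text{-}\Lambda)=\bigcap_{n\ge 1}\cT_n$, so every such $M$ is $n$-torsionfree for all $n$, i.e.\ $\Ext^i_{\Lambda\op}(\operatorname{Tr} M,\Lambda)=0$ for all $i$; what is missing is precisely the vanishing of $\Ext^i_{\Lambda}(M,\Lambda)$. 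Your diagnosis that a uniform finiteness (such as $\idim\Lambda<\infty$, supplied by \textbf{GNC} in Proposition~\ref{finite injective dimension}) would close this gap is correct, but the description of what the gap \emph{is} should be rewritten to match the paper's definition.
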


We start by reviewing the construction and properties of Co-Gorenstein categories. In particular, we give some equivalent properties for an algebra to be right Co-Gorenstein, see Corollary \ref{Alternative description Co-Gorenstein}. In Section \ref{Co-Gorenstein Artin algebras} we show the implications above.

Throughout the note $R$ denotes a commutative artinian ring, $\Lambda$ an artin $R$-algebra. Also, we fix the notation  
\begin{multline*} \Omega^{n}(\md\text{-}\Lambda):= \{M\in \md\text{-}\Lambda\mid\text{there exists an exact sequence} \\
 0\to M\to P_1\to \cdots \to P_n \text{ with } P_i\in \md\text{-}\Lambda \text{ projective for } 1\leq i\leq n \}
 \end{multline*}

\section{Co-Gorenstein categories}\label{Co-Gorenstein categories}

Let $\cA$ be an abelian category with enough projectives, and let $\cP:=\Proj(\cA)$ denote the full subcategory of projective objects in $\cA$. The projectively stable category $\underline{\cA}:= \cA/\cP$ of $\cA$ consists of the same objects as $\cA$, and with morphisms 
\[
\underline{\cA}(A_1,A_2):= \cA(A_1,A_2)/\sim
\]
where $f \sim g$ if $f-g$ factors through a projective object. For a morphism $f\colon A_1\to A_2$ in $\cA$ we let $\underline{f}\colon \underline{A_1}\to \underline{A_2}$ denote the corresponding morphism in $\underline{\cA}$. For each object $A\in \cA$ choose an exact sequence $0\to \Omega A\to P\to A\to 0$ where $P$ is projective. The association $A\mapsto \Omega A$ induces a functor $\Omega \colon \underline{\cA}\to \underline{\cA}$ \cite[Section 3]{Hel60}. Furthermore, if $0\to K\to Q\to A\to 0$ is any exact sequence with $Q$ projective, then there exists a unique isomorphism 
\begin{align}\label{unique syzygy}
\underline{K}\xrightarrow{\cong} \Omega \underline{A}
\end{align} in $\underline{\cA}$ which is induced from a morphism $K\to \Omega A$ in $\cA$ such that there exists a commutative diagram  
\[
\begin{tikzpicture}[description/.style={fill=white,inner sep=2pt}]
\matrix(m) [matrix of math nodes,row sep=2.5em,column sep=4em,text height=1.5ex, text depth=0.25ex] 
{0 & K & Q & A & 0\\
 0 & \Omega A & P & A & 0 \\};
\path[->]
(m-1-1) edge node[auto] {$$} 	    										(m-1-2)
(m-1-2) edge node[auto] {$$} 	    										(m-1-3)
(m-1-3) edge node[auto] {$$} 	    										(m-1-4)
(m-1-4) edge node[auto] {$$} 	    										(m-1-5)

(m-2-1) edge node[auto] {$$} 	    										(m-2-2)
(m-2-2) edge node[auto] {$$} 	    										(m-2-3)
(m-2-3) edge node[auto] {$$} 	    										(m-2-4)
(m-2-4) edge node[auto] {$$} 	    										(m-2-5)

(m-1-2) edge node[auto] {$$} 	    						    			(m-2-2)
(m-1-3) edge node[auto] {$$} 	    									  	(m-2-3)
(m-1-4) edge node[auto] {$1_A$} 	    								 		(m-2-4);	
\end{tikzpicture}
\]
for some morphism $Q\to P$.

\begin{Definition}\label{Costabilization}
The \emphbf{costabilization} $\cR(\underline{\cA})$ of $\underline{\cA}$ is a category with objects consisting of sequences $(\underline{A_n},\alpha_n)_{n\in \bZ}$ where $\underline{A_n}\in \underline{\cA}$ and $\alpha_n\colon \underline{A_n}\xrightarrow{\cong} \Omega \underline{A_{n+1}}$ is an isomorphism in $\underline{\cA}$. A morphism 
\[
(\underline{A_n},\alpha_n)\to (\underline{B_n},\beta_n)
\]
in $\cR(\underline{\cA})$ consists of a sequence $(f_n)_{n\in \bZ}$ of morphisms $f_n\colon \underline{A_n}\to \underline{B_n}$ in $\underline{\cA}$ satisfying $\beta_n\circ f_n = \Omega(f_{n+1})\circ \alpha_n$.  
\end{Definition}

\begin{Remark}\label{universal property costabilization}
Here we explain the name and the universal property of the costabilization. We follow the conventions in \cite{Hel68}. A \emphbf{category with suspension} is a pair $(\cC,T)$ where $\cC$ is a category and $T\colon \cC\to \cC$ is a functor. A \emphbf{weakly stable} morphism 
\[
(F,\phi)\colon (\cC_1,T_1)\to (\cC_2,T_2)
\] 
between two categories with suspension is given by a functor $F\colon \cC_1\to \cC_2$ together with an isomorphism $\phi\colon F\circ T_1\xrightarrow{\cong} T_2\circ F$. It is called \emphbf{stable} if $\phi$ is the identity morphism. Composition of weakly stable morphisms is given by 
\[
(G,\psi)\circ (F,\phi)= (G\circ F, \psi F\circ G(\phi)).
\] 
This gives a category where the objects are categories with suspensions, and where the morphisms are the weakly stable morphisms. If $(\cC,T)$ is a category with suspension, then we say that $(\cC,T)$ is \emphbf{stable} if $T\colon \cC\to \cC$ is an autoequivalence. Given a category with suspension $(\cC,T)$, we can associate a stable category $(\cR(\cC,T),\hat{T})$, called its \emphbf{costabilization}, as follows: An object of $\cR(\cC,T)$ is a sequence $(C_n,\alpha_n)_{n\in \bZ}$ where $C_n\in \cC$ and $\alpha_n\colon C_n\xrightarrow{\cong} T C_{n+1}$ is an isomorphism in $\cC$, and a morphism $(C_n,\alpha_n)\to (C'_n,\beta_n)$ in $\cR(\cC,T)$ is a sequence $(f_n)_{n\in \bZ}$ of morphisms $f_n\colon C_n\to C'_n$ in $\cC$ satisfying $\beta_n\circ f_n = T(f_{n+1})\circ \alpha_n$. The autoequivalence $\hat{T}\colon \cR(\cC,T)\to \cR(\cC,T)$ is given by 
\[
\hat{T}(C_n,\alpha_n)=(C_{n-1},\alpha_{n-1}).
\]
Note that if we consider $(\underline{\cA},\Omega)$ as a category with suspension, then we have that $\cR(\underline{\cA},\Omega)=\cR(\underline{\cA})$ where $\cR(\underline{\cA})$ is as in Definition \ref{Costabilization}. Now for a category with suspension $(\cC,T)$ there exists a weakly stable morphism  
\[
(R,\gamma)\colon (\cR(\cC,T),\hat{T})\to (\cC,T)
\]
 where $R \colon \cR(\cC,T) \to \cC$ is the forgetful functor sending $(C_n,\alpha_n)$ to $C_0$, and $\gamma\colon R\circ \hat{T} \xrightarrow{\cong} T\circ R$ is the isomorphism given by 
\[
R\hat{T}(C_n,\alpha_n) = C_{-1}\xrightarrow{\alpha_{-1}} T(C_0) = T R(C_n,\alpha_n)
\] 
The costabilization satisfies the following universal lifting property: If 
\[(F,\mu)\colon (\cB,\Sigma)\to (\cC,T)
\]
is a weakly stable morphism and $(\cB,\Sigma)$ is stable, then there exists a unique stable morphism $(G,1)\colon (\cB,\Sigma)\to (\cR(\cC,T),\hat{T})$ satisfying 
\[
(R,\gamma) \circ (G,1)=(F,\mu).
\] 
Explicitly, $G$ is given by $G(B) = (B_n,\beta_n)$ where $B_n=F\Sigma^{-n}(B)$ and $\beta_n\colon B_n \xrightarrow{\cong} TB_{n+1}$ is given by 
\[
B_n=F\Sigma^{-n}(B)\xrightarrow{\mu\Sigma^{-n-1}(B)}T F\Sigma^{-n-1}(B) =TB_{n+1}.
\]
This lifting property is dual to the universal extension property for the stabilization \cite[Proposition 1.1]{Hel68}, whence the name costabilization.
\end{Remark}

We fix some notation. Let $\textbf{C}(\cA)$ be the category of complexes in $\cA$. An object in $\textbf{C}(\cA)$ is denoted by
\[
(P_{\bullet},d_{\bullet}):=\cdots \xrightarrow{d_{-2}}P_{-1}\xrightarrow{d_{-1}} P_0\xrightarrow{d_0} P_1\xrightarrow{d_1}\cdots
\] 
For each integer $n\in \bZ$  we have functors 
\[
Z_n(-)\colon \textbf{C}(\cA)\to \cA \quad \text{and} \quad H_n(-)\colon \textbf{C}(\cA)\to \cA
\]
given by taking the $n$th cycles $Z_n(P_{\bullet},d_{\bullet})=\Ker d_n$ and the $n$th homology $H_n(P_{\bullet},d_{\bullet}):= \Ker d_n/\im d_{n-1}$. We say that $(P_{\bullet},d_{\bullet})$ is acyclic if $H_n(P_{\bullet},d_{\bullet})=0$ for all $n\in \bZ$. We call a morphism $(P_{\bullet},d_{\bullet})\xrightarrow{f_\bullet}(Q_{\bullet},d'_{\bullet})$ of complexes null-homotopic if there exists morphisms $h_i\colon P_i\to Q_{i-1}$ in $\cA$ such that $f_i=d'_{i-1}\circ h_i + h_{i+1}\circ d_i$ for all $i\in \bZ$. Let $\textbf{C}_{ac}(\cP)$ denote the full subcategory of $\textbf{C}(\cA)$ consisting of acyclic complexes with projective components, and let $\textbf{K}_{ac}(\cP)$ denote the homotopy category of $\textbf{C}_{ac}(\cP)$.  Explicitly, $\textbf{K}_{ac}(\cP)$ has the same objects as $\textbf{C}_{ac}(\cP)$, and with morphism spaces
\[
\textbf{K}_{ac}(\cP)((P_{\bullet},d_{\bullet}),(Q_{\bullet},d'_{\bullet})) = \textbf{C}_{ac}(\cP)((P_{\bullet},d_{\bullet}),(Q_{\bullet},d'_{\bullet}))/\sim
\]
where $f_{\bullet}\sim g_{\bullet}$ if the difference $f_{\bullet}-g_{\bullet}$ is null-homotopic.

Given $(P_{\bullet},d_{\bullet})$ in $\textbf{C}_{ac}(\cP)$, we obtain an object $(\underline{Z_n(P_{\bullet},d_{\bullet})},\alpha_n )$ in $\cR(\underline{\cA})$ where 
\[
\alpha_n\colon \underline{Z_n(P_{\bullet},d_{\bullet})} \to \Omega\underline{Z_{n+1}(P_{\bullet},d_{\bullet})}
\]
is the induced isomorphism as in \eqref{unique syzygy}. Furthermore, given a morphism $f_{\bullet}\colon (P_{\bullet},d_{\bullet})\xrightarrow{}(Q_{\bullet},d'_{\bullet})$ in $\textbf{C}_{ac}(\cP)$ we obtain morphisms 
\[
\underline{Z_n(f_{\bullet})}\colon \underline{Z_n(P_{\bullet},d_{\bullet})}\xrightarrow{}\underline{Z_n(Q_{\bullet},d'_{\bullet})}
\]
in $\underline{\cA}$ for each $n\in \bZ$, and it is easy to see that they make the diagram 
\[
\begin{tikzpicture}[description/.style={fill=white,inner sep=2pt}]
\matrix(m) [matrix of math nodes,row sep=2.5em,column sep=4em,text height=1.5ex, text depth=0.25ex] 
{ \underline{Z_n(P_{\bullet},d_{\bullet})} & \Omega\underline{Z_{n+1}(P_{\bullet},d_{\bullet})} \\
  \underline{Z_n(Q_{\bullet},d'_{\bullet})} & \Omega\underline{Z_{n+1}(Q_{\bullet},d'_{\bullet})} \\};
\path[->]
(m-1-1) edge node[auto] {$\cong$} 	    										(m-1-2)

(m-2-1) edge node[auto] {$\cong$} 	    										(m-2-2)

(m-1-1) edge node[auto] {$\underline{Z_n(f_{\bullet})}$} 	    				(m-2-1)
(m-1-2) edge node[auto] {$\Omega \underline{Z_{n+1}(f_{\bullet})}$} 	    		(m-2-2);	
\end{tikzpicture}
\]
commute where the horizontal isomorphisms are as in \eqref{unique syzygy}. Hence, we obtain a morphism $(\underline{Z_n(f_{\bullet})}\colon \underline{Z_n(P_{\bullet},d_{\bullet})}\xrightarrow{}\underline{Z_n(Q_{\bullet},d'_{\bullet})})_{n\in \bZ}$ in $\cR(\underline{\cA})$, and therefore, we have a functor 
\[
\textbf{C}_{ac}(\cP)\to \cR(\underline{\cA}).
\]
If $f_{\bullet}$ is null-homotopic, then the morphism $Z_n(f_{\bullet})\colon Z_n(P_{\bullet},d_{\bullet})\xrightarrow{}Z_n(Q_{\bullet},d'_{\bullet})$ factors through $P_n$, and hence $\underline{Z_n(f_{\bullet})}=0$. Therefore, we get an induced functor
\[
\textbf{K}_{ac}(\cP)\to \cR(\underline{\cA})
\] 

\begin{Proposition}\label{Costabilization vs acyclic complex with projective components}
The functor $\textbf{K}_{ac}(\cP)\to \cR(\underline{\cA})$ is dense and full. \footnote{This functor is claimed to be an equivalence in Theorem 3.11 in \cite{Bel00}. It is not clear to the authors why this is true.}
\end{Proposition}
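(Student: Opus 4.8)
The plan is to exploit the description of an acyclic complex $(P_{\bullet},d_{\bullet})\in\textbf{C}_{ac}(\cP)$ through its cycles: acyclicity is equivalent to a family of short exact sequences
\[
\mathcal{S}_n\colon 0\to Z_n(P_{\bullet})\xrightarrow{i_n}P_n\xrightarrow{q_n}Z_{n+1}(P_{\bullet})\to 0
\]
with $P_n$ projective and $d_n=i_{n+1}q_n$, and conversely any sequence of such short exact sequences spliced along the identifications $\Coker(i_n)=Z_{n+1}=\Ker(i_{n+1})$ reassembles into an object of $\textbf{C}_{ac}(\cP)$. Under our functor, $(P_{\bullet},d_{\bullet})$ goes to $(\underline{Z_n(P_{\bullet})},\alpha_n)$, where $\alpha_n$ is the canonical isomorphism \eqref{unique syzygy} attached to $\mathcal{S}_n$. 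So density and fullness become the problems of realizing, respectively, the objects and the morphisms of $\cR(\underline{\cA})$ by honest systems of short exact sequences.

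For density I would fix $(\underline{A_n},\alpha_n)\in\cR(\underline{\cA})$ together with the chosen defining sequences $0\to\Omega A_{n+1}\xrightarrow{\iota_{n+1}}Q_{n+1}\xrightarrow{\rho_{n+1}}A_{n+1}\to 0$. The naive idea of splicing these directly fails, and this is the step I expect to be the main obstacle: the cokernel of the $(n-1)$-st sequence is $A_n$, while the kernel of the $n$-th is $\Omega A_{n+1}$, and these agree only up to $\alpha_n$ in $\underline{\cA}$, not on the nose. To bridge this stable-versus-honest gap I would invoke the standard fact that an isomorphism of $\underline{\cA}$ is induced by an honest isomorphism after adding projective summands, so that $\alpha_n$ is realized by some $\beta_n\colon A_n\oplus P_n^{+}\xrightarrow{\cong}\Omega A_{n+1}\oplus P_n^{-}$ with $P_n^{\pm}$ projective. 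I would then pad each defining sequence symmetrically, forming
\[
\mathcal{S}_n\colon 0\to A_n\oplus P_n^{+}\xrightarrow{\,\iota'_n\circ\beta_n\,}Q_{n+1}\oplus P_n^{-}\oplus P_{n+1}^{+}\to A_{n+1}\oplus P_{n+1}^{+}\to 0,
\]
where $\iota'_n$ is $\iota_{n+1}$ on the first summand, the identity on $P_n^{-}$, and the inclusion of $P_{n+1}^{+}$. The point of the symmetric padding is that the cokernel of $\mathcal{S}_{n-1}$ and the sub-object of $\mathcal{S}_n$ are now literally the same object $A_n\oplus P_n^{+}$, so the $\mathcal{S}_n$ splice to an acyclic complex of projectives with $n$-th cycle $A_n\oplus P_n^{+}$, which is $\underline{A_n}$ in $\underline{\cA}$. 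Since each $\mathcal{S}_n$ is, via $\beta_n$, isomorphic to a padding of the defining sequence of $A_{n+1}$, its structure map is carried to $\alpha_n$; the projections $\underline{A_n\oplus P_n^{+}}\xrightarrow{\cong}\underline{A_n}$ then give an isomorphism in $\cR(\underline{\cA})$ onto $(\underline{A_n},\alpha_n)$, proving density.

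For fullness I would take a morphism $(\underline{f_n})\colon(\underline{Z_n},\alpha_n)\to(\underline{Z'_n},\alpha'_n)$ arising from two acyclic complexes with short exact sequences $\mathcal{S}_n,\mathcal{S}'_n$ as above, choose honest representatives $\phi_n\colon Z_n\to Z'_n$, and try to complete $(\phi_n,\phi_{n+1})$ to a morphism of short exact sequences $\mathcal{S}_n\to\mathcal{S}'_n$, i.e.\ to find $g_n\colon P_n\to P'_n$ with $g_ni_n=i'_n\phi_n$ and $q'_ng_n=\phi_{n+1}q_n$. The obstruction to such a $g_n$ is the single class $(\phi_n)_{*}[\mathcal{S}_n]-(\phi_{n+1})^{*}[\mathcal{S}'_n]\in\Ext^1(Z_{n+1},Z'_n)$. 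Under the standard natural isomorphism $\Ext^1(Z_{n+1},Z'_n)\cong\underline{\Hom}(\Omega Z_{n+1},Z'_n)$ the classes $[\mathcal{S}_n]$ and $[\mathcal{S}'_n]$ correspond to $\alpha_n^{-1}$ and $(\alpha'_n)^{-1}$, while pushout and pullback become post- and pre-composition; hence the obstruction corresponds to $\underline{f_n}\,\alpha_n^{-1}-(\alpha'_n)^{-1}\,\Omega(\underline{f_{n+1}})$. This vanishes precisely because $(\underline{f_n})$ is a morphism of $\cR(\underline{\cA})$, i.e.\ $\alpha'_n\underline{f_n}=\Omega(\underline{f_{n+1}})\alpha_n$. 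Thus every $g_n$ exists, $g_{\bullet}$ is a chain map because both squares of each $\mathcal{S}_n\to\mathcal{S}'_n$ commute, and it induces $\phi_n$, hence $\underline{f_n}$, on cycles; so the functor is full. I would note in passing that faithfulness is exactly what this method does not deliver: two chain maps inducing the same family on cycles differ by maps through the $P_n$ that need not be assembled into a null-homotopy, which is consistent with only claiming density and fullness here.
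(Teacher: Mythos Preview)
Your argument is correct and follows the same plan as the paper: for density you realize each stable isomorphism $\alpha_n$ by an honest isomorphism after padding with projectives and then splice the resulting short exact sequences, exactly as the paper does (the paper arranges the cycles to be $\Omega A_{n+1}$ rather than $A_n\oplus P_n^{+}$, but this is immaterial). For fullness the underlying content is identical, only the packaging differs: the paper first lifts $f_{n+1}$ to $g_n$ by projectivity, observes that the induced map $k_n$ on kernels satisfies $\underline{k_n}=\underline{f_n}$, and then subtracts an explicit correction term $h_{n+1}\circ p_n$ to force the $g_n$ into a chain map; your version expresses the same step as the vanishing of the obstruction $(\phi_n)_*[\mathcal S_n]-(\phi_{n+1})^*[\mathcal S'_n]$ under $\Ext^1(Z_{n+1},Z'_n)\cong\underline{\cA}(\Omega Z_{n+1},Z'_n)$, which is precisely the relation $\alpha'_n\underline{f_n}=\Omega(\underline{f_{n+1}})\alpha_n$. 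The two computations are the same argument read in different directions.
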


\begin{proof}
Let $(\underline{A_n},\alpha_n)$ be an arbitrary object in $\cR(\underline{\cA})$. By assumption, for all $n\in \bZ$ there exists an object $P_{n-1}\in \cP$ and an exact sequence
\[
0\to \Omega A_{n}\to P_{n-1}\to A_{n}\to 0 
\]
Since $\underline{A_n}\cong \Omega\underline{A_{n+1}}$ in $\underline{\cA}$, there exists objects $P_{n-1}',P_{n-1}''\in \cP$ and an isomorphism $A_n\oplus P_{n-1}'\cong \Omega A_{n+1}\oplus P_{n-1}''$ in $\cA$. This implies that there also exists an exact sequence
\[
0\to \Omega A_{n}\xrightarrow{} Q_{n-1}\xrightarrow{} \Omega A_{n+1}\to 0
\]
in $\cA$ where $Q_{n-1}\in \cP$. Hence, we obtain a complex $(Q_{\bullet},d_{\bullet})$ in $\textbf{K}_{ac}(\cP)$ with differential $d_n$ given by the composite $Q_n\to \Omega A_{n+2} \to Q_{n+1}$. Furthermore,  by construction the image of the complex $(Q_{\bullet},d_{\bullet})$ under the functor $\textbf{K}_{ac}(\cP)\to \cR(\underline{\cA})$ is the object $(\Omega \underline{A_{n+1}},\Omega(\alpha_{n+1}))$. Since we have an isomorphism
\[
(\Omega \underline{A_{n+1}},\Omega(\alpha_{n+1}))\cong (\underline{A_n},\alpha_n)
\]
in $\cR(\underline{\cA})$ it follows that the functor $\textbf{K}_{ac}(\cP)\to \cR(\underline{\cA})$  is dense.  

Let $(Q_{\bullet},d_{\bullet})$ and $(Q'_{\bullet},d'_{\bullet})$ be complexes in $\textbf{K}_{ac}(\cP)$ and let $A_n=Z_n(Q_{\bullet},d_{\bullet})$ and $A'_n=Z_n(Q'_{\bullet},d'_{\bullet})$ so that we have short exact sequences
\begin{align*}
& 0\to A_{n}\xrightarrow{i_n} Q_n\xrightarrow{p_n} A_{n+1}\to 0 \\
& 0\to A'_{n}\xrightarrow{i'_n} Q'_n\xrightarrow{p'_n} A'_{n+1}\to 0
\end{align*}
where $d_n= i_{n+1}\circ p_n$ and $d'_n= i'_{n+1}\circ p'_n$. Under the functor $\textbf{K}_{ac}(\cP)\to \cR(\underline{\cA})$  these complexes correspond to objects $(\underline{A_n},\alpha_n)$ and $(\underline{A'_n},\alpha'_n)$ in $\cR(\underline{\cA})$. Let 
\[
(\underline{f_n})\colon (\underline{A_n},\alpha_n)\to (\underline{A'_n},\alpha'_n)
\]
 be an arbitrary morphism between these objects in $\cR(\underline{\cA})$. For each $n\in \bZ$ choose a lifting $g_{n}\colon Q_{n}\to Q'_{n}$ of $f_{n+1}\colon A_{n+1}\to A'_{n+1}$. Since $A_{n}=\Ker p_n$ and $A'_n= \Ker p'_n$, we get a unique morphism $k_{n}\colon  A_{n}\to  A'_{n}$ satisfying $i'_{n}\circ k_n=g_{n}\circ i_{n}$. It is easy to see that $\underline{k_{n}}=\underline{f_n}$, and hence there exists a morphism $h_n\colon A_{n}\to Q'_{n-1}$ such that 
 \[
p'_{n-1}\circ h_n=f_{n}-k_{n}.
\]
Now since 
\begin{align*}
d'_n\circ (g_n-h_{n+1}\circ p_n) & =d'_n\circ g_n - d'_n\circ h_{n+1}\circ p_n 
 \\
& =i'_{n+1}\circ f_{n+1} \circ p_n-i'_{n+1}\circ (f_{n+1}-k_{n+1})\circ p_n \\
& = i'_{n+1}\circ k_{n+1}\circ p_n 
\end{align*}
and
\begin{align*}
(g_{n+1}-h_{n+2}\circ p_{n+1})\circ d_n = g_{n+1}\circ d_n = g_{n+1}\circ i_{n+1}\circ p_n \\ = i'_{n+1}\circ k_{n+1}\circ p_n
\end{align*}
it follows that the maps $l_n=g_n-h_{n+1}\circ p_n\colon Q_n\to Q'_n$ for all $n\in \bZ$ induce a map of chain complexes $l_{\bullet}\colon (Q_{\bullet},d_{\bullet})\to (Q'_{\bullet},d'_{\bullet})$. Since $Z_n(l_{\bullet})=k_n$ and $\underline{k_n}= \underline{f_n}$, it follows that the functor $\textbf{K}_{ac}(\cP)\to \cR(\underline{\cA})$ is full. 
\end{proof}

\begin{Remark}
It would be interesting to determine if the functor $\textbf{K}_{ac}(\cP)\to \cR(\underline{\cA})$ is an equivalence in general, or to find a counterexample and to determine  in which cases it induces an equivalence.
\end{Remark}

Let $R\colon \cR(\underline{\cA})\to \underline{\cA}$ be the forgetful functor sending $(\underline{A_n},\alpha_n)$ to $\underline{A_0}$, and let 
\[
\im R =\{\underline{A}\in \underline{\cA}\mid \underline{A}\cong R(X) \text{ for some }X\in \cR(\underline{\cA})\}.
\]
denote the essential image of $R$.

\begin{Definition}\label{Definition Co-Gorenstein}
Let $\cA$ be an abelian category with enough projectives. We say that $\cA$ is $\cP$-\emphbf{Co-Gorenstein} if the following holds:
\begin{enumerate}
\item\label{Definition Co-Gorenstein:1} The forgetful functor $R\colon \cR(\underline{\cA})\to \underline{\cA}$ is full and faithful;
\item\label{Definition Co-Gorenstein:2} If $0\to A_1\to A_2\to A_3$ is an exact sequence in $\cA$ with $\underline{A_1},\underline{A_3}\in \im R$, then $\underline{A_2}\in \im R$.\footnote{In \cite[Definition 3.13]{Bel00} it is only required that $R$ is full and faithful, and it is claimed that this implies assumption \ref{Definition Co-Gorenstein:2}, see \cite[Proposition 2.13 part (1)]{Bel00}. This is not clear to the authors, so we include this assumption in the definition.}
\end{enumerate} 
\end{Definition}

The notion of Co-Gorenstein category was defined more generally for left triangulated categories in \cite[Definition 3.13]{Bel00} and for an exact category in \cite[Definition 4.9]{Bel00}. However, we only consider the case above.

\begin{Remark}
We explain the name Co-Gorenstein: Let $\cS(\underline{\cA},\Omega)$ be the stabilization of the pair $(\underline{\cA},\Omega)$, see \cite{Hel68}. By \cite[Proposition 1.1]{Hel68} there exists a functor $\underline{\cA}\to \cS(\underline{\cA},\Omega)$ which satisfies a universal extension property dual to the universal lifting property stated in Remark \ref{universal property costabilization} for $\cR(\underline{\cA})$. Following \cite{Bel00}, the category $\underline{\cA}$ is called $\cP$-Gorenstein if there exists a full left triangulated subcategory $\cV\subset \underline{\cA}$ such that the composite $\cV \to \underline{\cA} \to \cS(\underline{\cA},\Omega)$ is an equivalence of left triangulated categories, see \cite[Definition 3.13]{Bel00}. This coincides well with the terminology in the literature, since if $\cA = \Md\text{-}\Lambda$ where $\Lambda$ is a noetherian ring, then $\cA$ is $\cP$-Gorenstein if and only if $\Lambda$ is an Iwanaga-Gorenstein ring, i.e. if the left and right injective dimension of $\Lambda$ as a module over itself is finite \cite[Theorem 6.9 and Corollary 6.11]{Bel00}. Since the definition of $\cP$-Co-Gorenstein is in terms of the costabilization rather than then the stabilization, this can explain the name.    
\end{Remark}

Our goal in the remainder of this subsection is to give a different characterization of $\cP$-Co-Gorenstein categories. To this end, let 
\begin{multline*}
\Omega^{\infty}(\cA):= \{A\in \cA\mid\text{there exists an exact sequence} \\
0\to A\to P_0\to P_1\to \cdots \text{with } P_i\in \cP \text{ } \forall i\geq 0\}.
\end{multline*} 

\begin{Lemma}\label{Infinite syzygy}
Let $\cA$ be an abelian category with enough projectives. Then $X\in \im R$ if and only if there exists $A\in \Omega^{\infty}(\cA)$ such that $\underline{A}\cong X$.
\end{Lemma}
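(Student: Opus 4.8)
The plan is to prove the two implications separately: for one direction I would write down an explicit object of $\cR(\underline{\cA})$, and for the other I would invoke the density established in Proposition \ref{Costabilization vs acyclic complex with projective components}. Throughout I treat $X$ as an object of $\underline{\cA}$ and use that $\im R$ is closed under isomorphism.

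For the implication ``$\Leftarrow$'', suppose $A\in\Omega^{\infty}(\cA)$, witnessed by an exact sequence $0\to A\to P_0\to P_1\to\cdots$ with each $P_i\in\cP$. I would first cut this coresolution into short exact sequences $0\to A^{(n)}\to P_n\to A^{(n+1)}\to 0$, where $A^{(0)}=A$ and $A^{(n+1)}=\im(P_n\to P_{n+1})$ for $n\geq 0$. By \eqref{unique syzygy}, each such sequence yields an isomorphism $\alpha_n\colon\underline{A^{(n)}}\xrightarrow{\cong}\Omega\underline{A^{(n+1)}}$ in $\underline{\cA}$. To extend the indexing to all of $\bZ$, I would set $A_n:=A^{(n)}$ for $n\geq 0$ and $A_n:=\Omega^{-n}A$ (the iterated chosen syzygy) for $n<0$, the latter giving identity isomorphisms $\underline{A_n}\xrightarrow{\cong}\Omega\underline{A_{n+1}}$. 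This assembles into an object $(\underline{A_n},\alpha_n)_{n\in\bZ}$ of $\cR(\underline{\cA})$ with $R(\underline{A_n},\alpha_n)=\underline{A_0}=\underline{A}$, so $\underline{A}\in\im R$, and hence $X\cong\underline{A}\in\im R$. The one point requiring care is the orientation of $\Omega$: the syzygy functor moves down the coresolution (from $A^{(n+1)}$ to $A^{(n)}$), so the non-negative part of the sequence must be indexed by the cosyzygies $A^{(n)}$ and the negative part by the honest syzygies $\Omega^{-n}A$.

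For the implication ``$\Rightarrow$'', suppose $X\in\im R$, so $X\cong\underline{A_0}$ for some object $(\underline{A_n},\alpha_n)$ of $\cR(\underline{\cA})$. By the density part of Proposition \ref{Costabilization vs acyclic complex with projective components}, this object is isomorphic in $\cR(\underline{\cA})$ to the image of some acyclic complex $(P_{\bullet},d_{\bullet})\in\textbf{K}_{ac}(\cP)$; comparing $0$-components gives $\underline{Z_0(P_{\bullet},d_{\bullet})}\cong\underline{A_0}\cong X$ in $\underline{\cA}$. Setting $A:=Z_0(P_{\bullet},d_{\bullet})=\Ker d_0$, the right half of the acyclic complex provides an exact sequence $0\to A\to P_0\xrightarrow{d_0}P_1\xrightarrow{d_1}\cdots$ with projective terms, since acyclicity gives $\Ker d_n=\im d_{n-1}$ for every $n\geq 1$. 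Hence $A\in\Omega^{\infty}(\cA)$ and $\underline{A}\cong X$, as required.

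The main input, and the only nontrivial ingredient, is the ``$\Rightarrow$'' direction: it rests on the density statement of Proposition \ref{Costabilization vs acyclic complex with projective components}, which is all that is needed (fullness and faithfulness are not used). Everything else is bookkeeping, the delicate point being the index orientation noted above; in writing it out I would verify carefully that the sequence extracted from the acyclic complex genuinely realizes $A=\Ker d_0$ as an infinite syzygy with the correct projective coresolution.
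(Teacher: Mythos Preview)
Your proposal is correct and follows essentially the same approach as the paper, which simply invokes Proposition \ref{Costabilization vs acyclic complex with projective components}; you have merely spelled out the details. Your ``$\Leftarrow$'' direction builds the costabilization object directly rather than first assembling an acyclic complex of projectives and applying the functor $\textbf{K}_{ac}(\cP)\to\cR(\underline{\cA})$, but this is the same construction unpacked, and your use of density for ``$\Rightarrow$'' is exactly what the paper's one-line proof intends.
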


\begin{proof}
This follows immediately from Proposition \ref{Costabilization vs acyclic complex with projective components}.
\end{proof}

 The proof of the following result is essentially the same as in \cite[Theorem 3.3]{ABM98}, but for the convenience of the reader we reproduce the argument here.

\begin{Lemma}\label{syzygy isomorphism}
Let $\cA$ be an abelian category with enough projectives, and let $A\in \cA$. The following statements are equivalent:
\begin{enumerate}
\item\label{syzygy isomorphism:1} $\Ext^1_{\cA}(A,P)=0$ for all $P\in \cA$ projective;
\item\label{syzygy isomorphism:2} The natural map $\underline{\cA}(\underline{A},\underline{A'}) \to \underline{\cA}(\Omega \underline{A}, \Omega \underline{A'})$ is an isomorphism for all $A'\in \cA$;
\item\label{syzygy isomorphism:3} The natural map $\underline{\cA}(\underline{A},\underline{A'}) \to \underline{\cA}(\Omega \underline{A}, \Omega \underline{A'})$ is an isomorphism for all $A'\in \cA$ for which there exists an exact sequence 
\[
0\to P\to A'\to A\to 0
\] 
with $P\in \cA$ projective.
\end{enumerate}
\end{Lemma}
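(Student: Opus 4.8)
The plan is to prove the cyclic chain \ref{syzygy isomorphism:1} $\Rightarrow$ \ref{syzygy isomorphism:2} $\Rightarrow$ \ref{syzygy isomorphism:3} $\Rightarrow$ \ref{syzygy isomorphism:1}, where \ref{syzygy isomorphism:2} $\Rightarrow$ \ref{syzygy isomorphism:3} is immediate because \ref{syzygy isomorphism:3} only asks the natural map to be an isomorphism for a subclass of objects $A'$. Throughout I fix exact sequences $0\to \Omega A\xrightarrow{\iota}P_0\xrightarrow{\pi}A\to 0$ and $0\to\Omega A'\xrightarrow{\iota'}P_0'\xrightarrow{\pi'}A'\to 0$ with $P_0,P_0'$ projective, and I write $\Phi_{A,A'}\colon\underline{\cA}(\underline A,\underline{A'})\to\underline{\cA}(\Omega\underline A,\Omega\underline{A'})$ for the natural map, which sends $\underline f$ to $\Omega\underline f$, computed by lifting $f$ to the two presentations and restricting to the syzygies.

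For \ref{syzygy isomorphism:1} $\Rightarrow$ \ref{syzygy isomorphism:2} I would treat surjectivity and injectivity separately, each time reducing to an extension problem governed by $\Ext^1_{\cA}(A,-)$ evaluated at a projective. For surjectivity, given $g\colon\Omega A\to\Omega A'$ I compose with $\iota'$ and try to extend $\iota'g\colon\Omega A\to P_0'$ along $\iota$ to a morphism $P_0\to P_0'$; the obstruction lies in $\Ext^1_{\cA}(A,P_0')$, which vanishes by \ref{syzygy isomorphism:1}, and the extension descends to some $f\colon A\to A'$ with $\Omega f=g$. For injectivity, if $\Omega f$ factors as $\beta\alpha$ through a projective $Q$, I extend $\alpha\colon\Omega A\to Q$ along $\iota$ using $\Ext^1_{\cA}(A,Q)=0$, and a short diagram chase then yields $f=\pi'h$ for some $h\colon A\to P_0'$, so $f$ factors through the projective $P_0'$ and $\underline f=0$.

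The substantial implication is \ref{syzygy isomorphism:3} $\Rightarrow$ \ref{syzygy isomorphism:1}, and here the main obstacle is that \ref{syzygy isomorphism:3} only controls $\Phi_{A,A'}$ with the fixed source $A$, whereas the argument naturally wants injectivity of $\Phi_{A,A}$, which is not literally an instance of \ref{syzygy isomorphism:3}. I plan to circumvent this by applying \ref{syzygy isomorphism:3} to the split object $A'=A\oplus P$, which fits into the required sequence $0\to P\to A\oplus P\to A\to 0$ for any projective $P$. Since $\Phi_{A,-}$ is additive in its second variable, $\Phi_{A,A\oplus P}$ decomposes as a direct sum whose summand indexed by $A$ is exactly $\Phi_{A,A}$; as a direct summand of an isomorphism it is itself an isomorphism, in particular injective.

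With $\Phi_{A,A}$ injective in hand, I would prove \ref{syzygy isomorphism:1} by showing $\Ext^1_{\cA}(A,Q)=0$ for every projective $Q$. Realizing an arbitrary class as $0\to Q\xrightarrow{u}A'\xrightarrow{v}A\to 0$, a pullback computation identifies a projective presentation of $A'$ with kernel $\Omega A$ for which the chosen lift of $v$ restricts to the identity on $\Omega A$; hence $\Omega\underline v$ is an isomorphism. Surjectivity of $\Phi_{A,A'}$ for this $A'$ (from \ref{syzygy isomorphism:3}) produces $\underline s\colon\underline A\to\underline{A'}$ with $\Omega\underline s=(\Omega\underline v)^{-1}$, whence $\Omega(\underline v\,\underline s)=\Omega(\underline{1_A})$, and injectivity of $\Phi_{A,A}$ upgrades this to $\underline v\,\underline s=\underline{1_A}$ in $\underline{\cA}$. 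Finally, lifting $\underline s$ to an honest morphism and correcting it by a map through a projective, using that $v$ is an epimorphism along which projectives lift, yields a genuine section of $v$, so the sequence splits and the class vanishes. I expect this last step, the passage from a stable section to an honest one, to be the only bookkeeping requiring care beyond the key direct-summand trick.
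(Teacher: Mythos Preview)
Your proof is correct and follows the same route as the paper: \ref{syzygy isomorphism:1}$\Rightarrow$\ref{syzygy isomorphism:2} via extension/lifting problems controlled by $\Ext^1_{\cA}(A,-)$ at projectives, \ref{syzygy isomorphism:2}$\Rightarrow$\ref{syzygy isomorphism:3} trivially, and \ref{syzygy isomorphism:3}$\Rightarrow$\ref{syzygy isomorphism:1} by pulling back along a projective presentation of $A$ to see that $\Omega\underline{v}$ is invertible, producing a stable section of $v$, and then upgrading it to an honest section.

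Two small remarks. First, your direct-summand trick is unnecessary: the case $A'=A$ is already literally an instance of \ref{syzygy isomorphism:3}, via the sequence $0\to 0\to A\xrightarrow{1_A} A\to 0$ (the zero object is projective). The paper simply applies \ref{syzygy isomorphism:3} to the pair $A,A$ to conclude that $\Phi_{A,A}$ is an isomorphism; your detour through $A\oplus P$ is correct but superfluous. Second, where you spell out the passage from a stable section to an honest one, the paper instead quotes \cite[Proposition~5.4]{MZ15}. Your direct argument---lift the projective factorization of $vs-1_A$ through the epimorphism $v$ and subtract---is exactly the proof of that fact, and has the mild advantage of working verbatim in any abelian category with enough projectives, whereas the cited statement is phrased for module categories.
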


\begin{proof}
We prove that \ref{syzygy isomorphism:1} implies \ref{syzygy isomorphism:2}. Consider the exact sequences \\ $0\to \Omega A \xrightarrow{i} P\xrightarrow{p} A\to 0$ and $0\to \Omega A' \xrightarrow{i'} P'\xrightarrow{p'} A'\to 0$ with $P$ and $P'$ projective. Given a morphism $\Omega A\xrightarrow{f} \Omega A'$, the composite $\Omega A\xrightarrow{f} \Omega A'\xrightarrow{i'} P'$ can be extended to a morphism $h\colon P\to P'$ since $\Ext^1_{\cA}(A,P')=0$. This gives a commutative diagram   
\[
\begin{tikzpicture}[description/.style={fill=white,inner sep=2pt}]
\matrix(m) [matrix of math nodes,row sep=2.5em,column sep=4em,text height=1.5ex, text depth=0.25ex] 
{0 & \Omega A & P & A & 0\\
 0 &  \Omega A' & P' & A' & 0 \\};
\path[->]
(m-1-1) edge node[auto] {$$} 	    										(m-1-2)
(m-1-2) edge node[auto] {$i$} 	    										(m-1-3)
(m-1-3) edge node[auto] {$p$} 	    										(m-1-4)
(m-1-4) edge node[auto] {$$} 	    										(m-1-5)

(m-2-1) edge node[auto] {$$} 	    										(m-2-2)
(m-2-2) edge node[auto] {$i'$} 	    										(m-2-3)
(m-2-3) edge node[auto] {$p'$} 	    										(m-2-4)
(m-2-4) edge node[auto] {$$} 	    										(m-2-5)

(m-1-2) edge node[auto] {$f$} 	    						    			(m-2-2)
(m-1-3) edge node[auto] {$h$} 	    									  	(m-2-3)
(m-1-4) edge node[auto] {$g$} 	    								 		(m-2-4);	
\end{tikzpicture}
\]
with exact rows, where $g$ is induced from the commutativity of the left square. In particular, we get that $\Omega \underline{g} = \underline{f}$. This shows that the natural map $\underline{\cA}(\underline{A},\underline{A'}) \to \underline{\cA}(\Omega \underline{A}, \Omega \underline{A'})$ is an epimorphism. To prove that is a monomorphism, we assume that we are given a morphism $g\colon A\to A'$ as above, and that $\Omega \underline{g}=\underline{f}=0$. This means that $f$ can be written as a composite $\Omega A\xrightarrow{u} Q\xrightarrow{v} \Omega A'$ where $Q$ is projective. As $\Ext^1_{\cA}(A,Q)=0$ we get that $u$ comes from some $w\colon P\to Q$. Since $(h-i'vw)i=0$, there is a $t\colon A\to P$ such that $t\circ p = h-i'vw$. The morphism $t$ is then a lifting of $g$ to $P'$. In particular, $\underline{g}=0$. Hence, the map $\underline{\cA}(\underline{A},\underline{A'}) \to \underline{\cA}(\Omega \underline{A}, \Omega \underline{A'})$ is also a monomorphism, and therefore an isomorphism. 

Obviously, \ref{syzygy isomorphism:2} implies \ref{syzygy isomorphism:3}, so it only remains to show that \ref{syzygy isomorphism:3} implies \ref{syzygy isomorphism:1}. Let $0\to P \to A'\xrightarrow{f} A\to 0$ be an arbitrary exact sequence with $P$ projective. We want to show that it is split exact. Let $0\to \Omega A\to Q\to A\to 0$ be an exact sequence with $Q$ projective. The pullback of $f$ along $Q\to A$ shows that $\Omega \underline{f}\colon \Omega \underline{A'}\to \Omega \underline{A}$ is an isomorphism in the stable category. By assumption, if $\underline{h}$ is its inverse, then $\underline{h}=\Omega \underline{g}$ for some $g\colon A\to A'$. The same assumption applied to the pair $A,A$ shows that the map $\underline{\cA}(\underline{A},\underline{A}) \to \underline{\cA}(\Omega \underline{A}, \Omega \underline{A})$ is an isomorphism and therefore $\underline{f}\circ \underline{g}=\underline{1}$. But an epimorphism in a module category is split if and only if it is a split epimorphism in the stable category, see \cite[Proposition 5.4]{MZ15}. Thus $f$ is a split epimorphism. 
\end{proof}

Let 
\begin{align*}
& {}^{{}_1\perp}\cP := \{A\in \cA\mid\Ext^1_{\cA}(A,P)=0\text{ for all } P\in \cP\} \\
& {}^{\perp}\cP := \{A\in \cA\mid\Ext^i_{\cA}(A,P)=0\text{ for all } P\in \cP \text{ and } i\geq 1\}
\end{align*}
A complex $(P_{\bullet},d_{\bullet})$ in $\textbf{C}_{ac}(\cP)$ is called totally acyclic if the complex
\[
\cdots \xrightarrow{-\circ d_1}\cA(P_{1},Q)\xrightarrow{-\circ d_0} \cA(P_{0},Q)\xrightarrow{-\circ d_{-1}}\cdots  
\] 
is acyclic for any $Q\in \cP$. An object $A\in \cA$ is called Gorenstein projective if $A=Z_0(P_{\bullet},d_{\bullet})$ for some totally acyclic complex $(P_{\bullet},d_{\bullet})$. The subcategory of Gorenstein projective objects in $\cA$ is denoted by  $\Gproj(\cA)$. 

\begin{Corollary}[Theorem 4.10 in \cite{Bel00}]\label{Alternative description Co-Gorenstein}
Let $\cA$ be an abelian category with enough projectives. The following statements are equivalent:
\begin{enumerate}
\item\label{Alternative description Co-Gorenstein:1} $\cA$ is $\cP$-Co-Gorenstein;
\item\label{Alternative description Co-Gorenstein:2} $\Omega^{\infty}(\cA)\subset {}^{{}_1\perp}\cP$;
\item\label{Alternative description Co-Gorenstein:3} $\Omega^{\infty}(\cA)\subset {}^{\perp}\cP$; 
\item\label{Alternative description Co-Gorenstein:4} $\Omega^{\infty}(\cA)=\Gproj(\cA)$.
\end{enumerate}
\end{Corollary}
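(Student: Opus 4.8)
The plan is to split the four-way equivalence into a purely homological block \ref{Alternative description Co-Gorenstein:2}$\Leftrightarrow$\ref{Alternative description Co-Gorenstein:3}$\Leftrightarrow$\ref{Alternative description Co-Gorenstein:4}, and then to bridge to the categorical statement \ref{Alternative description Co-Gorenstein:1} by translating the defining properties of a $\cP$-Co-Gorenstein category into the hom-isomorphisms of Lemma \ref{syzygy isomorphism}. Two facts about $\Omega^{\infty}(\cA)$ are used repeatedly: it is closed under passing to syzygies and cosyzygies (splice the defining coresolution of $A$ with $0\to \Omega A\to P\to A\to 0$, respectively shift the coresolution), and by Lemma \ref{Infinite syzygy} its essential image in $\underline{\cA}$ is exactly $\im R$.

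For \ref{Alternative description Co-Gorenstein:2}$\Rightarrow$\ref{Alternative description Co-Gorenstein:3} I would use dimension shifting: for $A\in\Omega^{\infty}(\cA)$ one has $\Ext^{i}_{\cA}(A,P)\cong \Ext^{1}_{\cA}(\Omega^{i-1}A,P)$, and each $\Omega^{i-1}A$ again lies in $\Omega^{\infty}(\cA)\subseteq {}^{{}_1\perp}\cP$, so all higher groups vanish; the reverse implication is trivial since ${}^{\perp}\cP\subseteq {}^{{}_1\perp}\cP$. For \ref{Alternative description Co-Gorenstein:3}$\Leftrightarrow$\ref{Alternative description Co-Gorenstein:4} I would invoke the standard facts that $\Gproj(\cA)\subseteq \Omega^{\infty}(\cA)\cap {}^{\perp}\cP$ always holds (the right half of a totally acyclic complex is a projective coresolution, and total acyclicity forces the $\Ext$-vanishing). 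For the remaining inclusion $\Omega^{\infty}(\cA)\subseteq \Gproj(\cA)$ under \ref{Alternative description Co-Gorenstein:3}, I would, given $A\in\Omega^{\infty}(\cA)$, splice a projective resolution of $A$ with a projective coresolution to obtain an acyclic complex $T_{\bullet}$ of projectives with $A=Z_0(T_{\bullet})$; all of its cycles are syzygies or cosyzygies of $A$, hence lie in $\Omega^{\infty}(\cA)\subseteq {}^{\perp}\cP$, and identifying $H^{n}\big(\cA(T_{\bullet},Q)\big)$ with an $\Ext^{1}$ of a cycle shows $T_{\bullet}$ is totally acyclic, so $A\in\Gproj(\cA)$.

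To handle \ref{Alternative description Co-Gorenstein:1}, the first task is to unwind the full-faithfulness of $R$. Because each $\alpha_n$ is invertible, a morphism $(f_n)$ in $\cR(\underline{\cA})$ has its components with $n<0$ determined by $f_0$, while the components with $n>0$ amount to compatible lifts of $f_0$ along $\Omega$; tracing this shows that $R$ is full and faithful if and only if $\Omega$ induces a bijection $\underline{\cA}(\underline A,\underline{A'})\to \underline{\cA}(\Omega\underline A,\Omega\underline{A'})$ for all $A,A'\in\Omega^{\infty}(\cA)$, i.e. exactly the map of Lemma \ref{syzygy isomorphism}\ref{syzygy isomorphism:2} restricted to $\Omega^{\infty}(\cA)$. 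Granting this, \ref{Alternative description Co-Gorenstein:2}$\Rightarrow$\ref{Alternative description Co-Gorenstein:1} is quick: \ref{syzygy isomorphism:1}$\Rightarrow$\ref{syzygy isomorphism:2} of Lemma \ref{syzygy isomorphism} supplies the required bijections, hence the full-faithfulness of $R$, while the extension-closedness demanded in Definition \ref{Definition Co-Gorenstein}\ref{Definition Co-Gorenstein:2} follows from the equivalence $\Omega^{\infty}(\cA)=\Gproj(\cA)$ of \ref{Alternative description Co-Gorenstein:4} together with the standard fact that $\Gproj(\cA)$ is closed under extensions.

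The delicate direction is \ref{Alternative description Co-Gorenstein:1}$\Rightarrow$\ref{Alternative description Co-Gorenstein:2}, and I expect it to be the main obstacle. Fix $A\in\Omega^{\infty}(\cA)$; to conclude $A\in {}^{{}_1\perp}\cP$ it suffices, by Lemma \ref{syzygy isomorphism}, to verify its condition \ref{syzygy isomorphism:3}, namely that the natural map $\underline{\cA}(\underline A,\underline{A'})\to \underline{\cA}(\Omega\underline A,\Omega\underline{A'})$ is bijective for every $A'$ fitting into an exact sequence $0\to P\to A'\to A\to 0$ with $P\in\cP$. The subtlety is that such an $A'$ need not lie in $\Omega^{\infty}(\cA)$ -- indeed $A'\in\Omega^{\infty}(\cA)$ essentially forces the sequence to split -- so the full-faithfulness of $R$, which only controls morphisms between objects of $\Omega^{\infty}(\cA)$, does not apply to $A'$ directly. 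This is precisely where Definition \ref{Definition Co-Gorenstein}\ref{Definition Co-Gorenstein:2} enters: since $\underline P=0$ and $\underline A$ both lie in $\im R$, the hypothesis forces $\underline{A'}\in \im R=\underline{\Omega^{\infty}(\cA)}$, so $\underline{A'}\cong \underline{A''}$ for some $A''\in\Omega^{\infty}(\cA)$ by Lemma \ref{Infinite syzygy}; as the map in question depends only on the stable isomorphism class of $A'$, the bijection provided by $R$ for the pair $(A,A'')$ transports to $A'$. Thus condition \ref{syzygy isomorphism:3} holds and $\Ext^{1}_{\cA}(A,P)=0$. I expect the bookkeeping in the full-faithfulness-to-$\Omega$ translation, and this careful use of condition \ref{Definition Co-Gorenstein:2} to compensate for $A'\notin\Omega^{\infty}(\cA)$, to be the only genuinely nontrivial points; everything else reduces to dimension shifting and the standard theory of complete resolutions.
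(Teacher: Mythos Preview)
Your proposal is correct and follows essentially the same route as the paper: the cycle \ref{Alternative description Co-Gorenstein:2}$\Rightarrow$\ref{Alternative description Co-Gorenstein:3}$\Rightarrow$\ref{Alternative description Co-Gorenstein:4} is dismissed as straightforward, \ref{Alternative description Co-Gorenstein:4}$\Rightarrow$\ref{Alternative description Co-Gorenstein:1} is obtained by showing that $\Omega$ is fully faithful on $\im R$ (via Lemmas \ref{Infinite syzygy} and \ref{syzygy isomorphism}) and then reconstructing the components $f_n$ of a morphism in $\cR(\underline{\cA})$ from $f_0$, and \ref{Alternative description Co-Gorenstein:1}$\Rightarrow$\ref{Alternative description Co-Gorenstein:2} goes by deducing that $\Omega$ is an equivalence on $\im R$ and invoking Lemma \ref{syzygy isomorphism}.

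The one place where your write-up is actually more careful than the paper's is the step \ref{Alternative description Co-Gorenstein:1}$\Rightarrow$\ref{Alternative description Co-Gorenstein:2}. The paper simply observes that $R\circ\hat{\Omega}\cong\Omega\circ R$ with $\hat{\Omega}$ an autoequivalence, concludes that $\Omega$ is an equivalence on $\im R$, and then cites Lemmas \ref{Infinite syzygy} and \ref{syzygy isomorphism} without further comment. But, exactly as you point out, applying Lemma \ref{syzygy isomorphism}\ref{syzygy isomorphism:3} requires knowing that the middle term $A'$ of $0\to P\to A'\to A\to 0$ has $\underline{A'}\in\im R$, and this is precisely what condition \ref{Definition Co-Gorenstein:2} of Definition \ref{Definition Co-Gorenstein} supplies. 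So your ``delicate direction'' analysis is not an alternative argument but a spelling-out of a step the paper elides; the use of the autoequivalence $\hat{\Omega}$ is the paper's slightly cleaner replacement for your ``tracing'' of the components, but the content is the same.
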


\begin{proof}
Let $\hat{\Omega}\colon \cR(\underline{\cA})\to \cR(\underline{\cA})$ be the autoequivalence given by $\hat{\Omega}(\underline{A_n},\alpha_n)=(\underline{A_{n-1}},\alpha_{n-1})$. Then there exists an isomorphism $R\circ \hat{\Omega} \cong \Omega \circ R$. Hence, if $\cA$ is $\cP$-Co-Gorenstein, then $R$ is an equivalence onto $\im R$, and therefore $\Omega\colon \im R\to \im R$ is also an equivalence. It follows that $\Omega^{\infty}(\cA)\subset {}^{{}_1\perp}\cP$ by Lemma \ref{Infinite syzygy} and Lemma \ref{syzygy isomorphism}, which proves \ref{Alternative description Co-Gorenstein:1} $\implies$ \ref{Alternative description Co-Gorenstein:2}. The implications \ref{Alternative description Co-Gorenstein:2} $\implies$ \ref{Alternative description Co-Gorenstein:3} and \ref{Alternative description Co-Gorenstein:3} $\implies$ \ref{Alternative description Co-Gorenstein:4} are straightforward. For \ref{Alternative description Co-Gorenstein:4} $\implies$ \ref{Alternative description Co-Gorenstein:1}, note first that $\Omega \colon \im R\to \im R$ is full and faithful by Lemma \ref{Infinite syzygy} and Lemma \ref{syzygy isomorphism}. Let $(f_n)\colon (\underline{A_n},\alpha_n)\to (\underline{A'_n},\alpha'_n)$ be a morphism in $\cR(\underline{\cA})$. For $n<0$ we can write $f_n$ as a composite 
\begin{align}\label{equation 1}
 \underline{A_n}\cong \Omega(\underline{A_{n+1}})\cong \cdots \cong \Omega^{-n}(\underline{A_0})\xrightarrow{\Omega^{-n}(f_0)} \Omega^{-n}(\underline{A'_0})\cong \cdots \cong \underline{A'_n}
\end{align}
and for $n>0$ we can write $\Omega^n(f_n)$ as a composite
\begin{align}\label{equation 2}
\Omega^n(\underline{A_n})\cong \Omega^{n-1}(\underline{A_{n-1}})\cong \cdots \cong \underline{A_0}\xrightarrow{f_0} \underline{A'_0}\cong \cdots \cong \Omega^n(\underline{A'_n})
\end{align}
Hence, if $f_0=0$ then $f_n=0$ for $n<0$ and $\Omega^n(f_n)=0$ for $n>0$. Since $\Omega$ is faithful, it follows that $f_n=0$ for all $n\in \bZ$, and therefore $R$ is faithful. To show that $R$ is full, we chose again two objects $(\underline{A_n},\alpha_n)$ and $(\underline{A'_n},\alpha'_n)$ in $\cR(\underline{\cA})$, and we let $f_0\colon \underline{A_0}\to \underline{A'_0}$ be an arbitrary morphism in $\underline{\cA}$. Define morphisms $f_n\colon \underline{A_{n}}\to \underline{A'_n}$ for $n<0$ and $g_n\colon \Omega^n(\underline{A_n})\to \Omega^n(\underline{A'_n})$ for $n>0$ in $\underline{\cA}$ by equation \eqref{equation 1} and \eqref{equation 2}, respectively. Since $\Omega$ is full and faithful, there exists for each $n>0$ a unique morphism $f_n\colon \underline{A_{n}}\to \underline{A'_n}$ satisfying $\Omega^n(f_n)=g_n$. A straightforward computation then shows that $(f_n)\colon (\underline{A_n},\alpha_n)\to (\underline{A'_n},\alpha'_n)$ is a morphism in $\cR(\underline{\cA})$, and hence $R$ is full. Finally, part \ref{Definition Co-Gorenstein:2} in the definition of $\cP$-Co-Gorenstein holds since $\Gproj(\cA)$ is closed under extensions and by Lemma \ref{Infinite syzygy}. Hence, the claim follows. 
\end{proof}

\section{Co-Gorenstein Artin algebras}\label{Co-Gorenstein Artin algebras}

We now restrict ourselves to the case where $\cA=\md \text{-}\Lambda$ and $\cP=\Proj (\md \text{-}\Lambda)$ for an artin $R$-algebra $\Lambda$. 

\begin{Definition}\label{Definition right Co-Gorenstein}
$\Lambda$ is \emphbf{right Co-Gorenstein} if $\md\text{-}\Lambda$ is $\cP$-Co-Gorenstein.
\end{Definition}

By the above results we know that $\Lambda$ is right Co-Gorenstein if and only if one of the following equivalent conditions hold:

\begin{enumerate}
\item $\Omega^{\infty}(\md\text{-}\Lambda)\subset {}^{{}_1\perp}\Lambda$;
\item $\Omega^{\infty}(\md\text{-}\Lambda)\subset {}^{\perp}\Lambda$;
\item $\Omega^{\infty}(\md\text{-}\Lambda)=\Gproj(\md\text{-}\Lambda)$.
\end{enumerate}
 
Note that any Iwanaga-Gorenstein algebra is Co-Gorenstein. The following example shows that the converse is not true.

\begin{Example}
Let $\Lambda:=k[x,y]/(x^2,xy,yx,y^2)$, and let $S$ be the unique simple $\Lambda$-module. $\Lambda$ is a 3-dimensional local algebra with a two dimensional socle, and therefore $\Lambda$ is not an Iwanaga-Gorenstein algebra as a local artin algebra is a Iwanaga-Gorenstein algebra if and only if it has simple socle. Note that 
\[
\Omega^1(\md\text{-}\Lambda)=\add S\oplus \Lambda,
\]
because $\Lambda$ is a radical square zero algebra and thus every kernel of a projective cover is semisimple.
Hence, if $M\in \Omega^{\infty}(\md\text{-}\Lambda)$ then $M\cong \Lambda^{n}\oplus S^m$ for $m,n\geq 0$. Note that in a local algebra, every module has projective dimension zero or infinite and thus the cokernel of a monomorphism of the form $\Lambda^n\to \Lambda^r$ is projective. Therefore, any monomorphism $\Lambda^n\to \Lambda^r$ is split. It follows that $S^m \in \Omega^{\infty}(\md\text{-}\Lambda)$. On the other hand, if there exists an exact sequence
\[
0\to S^{m_1}\to \Lambda^{m_2}\to S^{m_3}\to 0
\]
then we must have $m_1=2m_2=2m_3$. In particular, we have that $S^m\notin \Omega^s(\md\text{-}\Lambda)$ if $0<m<2^{s-1}$. Since $S^m \in \Omega^{\infty}(\md\text{-}\Lambda)$ we must have that $m=0$ and hence
\[
\Omega^{\infty}(\md\text{-}\Lambda)=\add \Lambda\subseteq {}^{\perp}\Lambda.
\]
Therefore, $\Lambda$ is right Co-Gorenstein. Finally, note that 
\[
\bigoplus_{i\in \bZ}S\in \Omega^{\infty}(\Md\text{-}\Lambda).
\]
where $\Md\text{-}\Lambda$ is the category of all right $\Lambda$-modules, not necessarily finite dimensional. Since Gorenstein projective modules are closed under direct summands and $S$ is not Gorenstein projective, it follows that $\Md\text{-}\Lambda$ is not $\cP$-Co-Gorenstein. 
\end{Example}

Our goal now is to prove the implications between the conjectures. Fix a minimal projective resolution 
\[
\cdots \to P_1(D\Lambda)\to P_0(D\Lambda)\to D\Lambda\to 0
\] 
and a minimal injective resolution  
\[
0\to \Lambda\to I_0(\Lambda)\to I_1(\Lambda)\to \cdots
\] 
of $\Lambda$ as a right module. Let $\cX_n:=\add \Omega^n(\md\text{-}\Lambda)$ denote the smallest additive subcategory of $\md\text{-}\Lambda$ which contains $\Omega^n(\md\text{-}\Lambda)$ and is closed under direct summands. Note that $\cX_n \neq \Omega^n(\md\text{-}\Lambda)$ in general, see the example after Proposition 3.5 in \cite{AR94}.

\begin{Theorem}\label{When d-syzygies are extension closed}
The following are equivalent for $n\geq 1$:
\begin{enumerate}
\item $\Omega^k(\md\text{-}\Lambda)$ is extension-closed for $1\leq k\leq n$;
\item $\cX_k$ is extension-closed for $1\leq k\leq n$;
\item $\idim P_k(D\Lambda)\leq k+1$ for $0\leq k<n$.
\end{enumerate}
\end{Theorem}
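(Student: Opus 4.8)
The plan is to run the cycle (i) $\Rightarrow$ (ii) $\Rightarrow$ (iii) $\Rightarrow$ (i). Only the first implication is elementary; the remaining two reproduce the circle of ideas of \cite{AR94}, and I would organise them by induction on $n$.

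For (i) $\Rightarrow$ (ii), fix $k$ with $1\le k\le n$ and let $0\to A\to B\to C\to 0$ be exact with $A,C\in\cX_k$. Choose modules $A',C'$ with $A\oplus A',\, C\oplus C'\in\Omega^k(\md\text{-}\Lambda)$, which exist because $\cX_k=\add\Omega^k(\md\text{-}\Lambda)$. Taking the direct sum of the given sequence with the two split sequences $0\to A'\xrightarrow{=}A'\to 0\to 0$ and $0\to 0\to C'\xrightarrow{=}C'\to 0$ produces an exact sequence
\[
0\to A\oplus A'\to B\oplus A'\oplus C'\to C\oplus C'\to 0
\]
whose outer terms lie in $\Omega^k(\md\text{-}\Lambda)$. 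By (i) the middle term lies in $\Omega^k(\md\text{-}\Lambda)\subseteq\cX_k$, and since $\cX_k$ is closed under summands we conclude $B\in\cX_k$. Thus $\cX_k$ is extension-closed.

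To handle the rest I would first rewrite (iii) on the side of $\Lambda$. Since $D$ is an exact duality carrying injective left $\Lambda$-modules to projective right $\Lambda$-modules, applying $D$ to the minimal injective coresolution $0\to {}_\Lambda\Lambda\to J^0\to J^1\to\cdots$ of $\Lambda$ as a left module yields exactly the minimal projective resolution of $D\Lambda=D({}_\Lambda\Lambda)$, so that $P_k(D\Lambda)\cong DJ^k$. As $D$ interchanges injective and projective dimension, $\idim P_k(D\Lambda)=\pdim_\Lambda J^k$, whence (iii) is equivalent to $\pdim_\Lambda J^{k-1}\le k$ for $1\le k\le n$. This is the hypothesis in the form used in \cite{AR94}, and it is the natural one, because the Auslander--Bridger transpose relates syzygies of right $\Lambda$-modules precisely to projective dimensions of the injective coresolution of $\Lambda$ on the opposite side. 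The engine for (ii) $\Rightarrow$ (iii) $\Rightarrow$ (i) is then the identification of $\cX_k$ with the $k$-torsionfree modules, that is, $M\in\cX_k$ if and only if $\Ext^i(\operatorname{Tr}M,\Lambda)=0$ for $1\le i\le k$, where $\operatorname{Tr}M$ denotes the transpose (a left module, so the $\Ext$ is taken of left modules). Given $0\to A\to B\to C\to 0$ with $A,C$ $k$-torsionfree, one chooses compatible projective presentations and dualizes by $(-)^{\ast}=\Hom_\Lambda(-,\Lambda)$; the resulting long exact sequences show that the obstruction to $B$ being $k$-torsionfree is governed by the groups $\Ext^i_\Lambda(C,\Lambda)$ in low degrees. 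Vanishing of exactly these obstruction groups on $\cX_k$ is, by a dimension-shift against the injective coresolution of $\Lambda$, equivalent to the bound $\pdim_\Lambda J^{k-1}\le k$; tracking this equivalence in both directions and feeding it into the induction on $n$ gives (ii) $\Rightarrow$ (iii), and running the same computation backwards to upgrade the resulting $\cX_k$-membership to honest membership in $\Omega^k(\md\text{-}\Lambda)$ gives (iii) $\Rightarrow$ (i).

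The main obstacle is this last bookkeeping. The non-exactness of the transpose forces one to carry auxiliary projective summands throughout, and it is precisely this phenomenon that makes $\Omega^k(\md\text{-}\Lambda)$ differ from $\cX_k$ and makes the upgrade from (ii) to (i) nontrivial for $k\ge 2$ (for $k=1$ the two categories coincide, both being the torsionless modules, so no upgrade is needed). Matching the shifted bound $\pdim_\Lambda J^{k-1}\le k$ with the correct grade condition on $\Ext^k_\Lambda(-,\Lambda)$ for all modules, and verifying that it propagates correctly through the induction, is the delicate step; I would establish it by adapting the relevant results of \cite{AR94} rather than re-deriving the grade/injective-coresolution correspondence from scratch.
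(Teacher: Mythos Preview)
The paper does not actually prove this theorem: its entire proof is the single sentence ``This follows from \cite[Theorem 4.7]{AR96}.'' So there is nothing to compare at the level of argument; the paper simply invokes Auslander--Reiten.

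Your proposal goes further than the paper in that you supply a clean direct proof of (i) $\Rightarrow$ (ii), which is correct as written (the key points being that $\Omega^k(\md\text{-}\Lambda)$ is closed under finite direct sums and that $\cX_k$ is by definition closed under summands). For (ii) $\Rightarrow$ (iii) $\Rightarrow$ (i) your sketch correctly identifies the ingredients---the duality rewriting of (iii) in terms of $\pdim_{\Lambda} J^{k-1}$, the identification of $\cX_k$ with the $k$-torsionfree modules via the Auslander--Bridger transpose, and the inductive bookkeeping---and honestly flags the delicate step (upgrading $\cX_k$-membership to membership in $\Omega^k$). This is exactly the circle of ideas behind the cited Auslander--Reiten result, so your proposal is in line with what the paper relies on; you are simply unpacking the citation rather than taking a different route. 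One minor point: the paper cites \cite{AR96} for this particular equivalence rather than \cite{AR94}, though both are used elsewhere in the paper and the underlying theory is the same.
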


\begin{proof}
This follows from \cite[Theorem 4.7]{AR96}.
\end{proof}

Unfortunately, the conditions in Theorem \ref{When d-syzygies are extension closed} are not left-right symmetric, see the paragraph after Corollary 2.8 in \cite{AR94}. However, the following result shows that after a small modification one obtains a symmetric condition.

\begin{Theorem}\label{Left-right symmetric}
Let $n\geq 1$ be an integer. The following are equivalent:
\begin{enumerate}
\item $\idim P_k(D\Lambda)\leq k$ for all $0\leq k<n$;
\item $\pdim I_k(\Lambda)\leq k$ for all $0\leq k<n$.
\end{enumerate}
\end{Theorem}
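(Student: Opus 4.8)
The plan is to apply the duality $D$ to turn statement (i)---which is about the minimal \emph{projective} resolution of $D\Lambda$ on the \emph{right}---into a statement about the minimal \emph{injective} resolution of $\Lambda$ as a \emph{left} module. Once this is done, the theorem becomes exactly the assertion that the condition in (ii) does not depend on the chosen side.

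First I would use that $D$ is an exact duality with $D\circ D\cong \Id$ which carries projective right modules to injective left modules and preserves minimality of resolutions. Writing $D\Lambda=D({}_\Lambda\Lambda)$ and applying $D$ to $\cdots \to P_1(D\Lambda)\to P_0(D\Lambda)\to D\Lambda\to 0$ produces the minimal injective resolution
\[
0\to {}_\Lambda\Lambda \to DP_0(D\Lambda)\to DP_1(D\Lambda)\to \cdots
\]
of $\Lambda$ as a left module; denote its $k$-th term by $J_k=DP_k(D\Lambda)$. Since the injective dimension of a right module equals the projective dimension of its $D$-dual, we have $\idim P_k(D\Lambda)=\pdim J_k$, and hence (i) is equivalent to
\[
\pdim J_k\le k\quad\text{for all }0\le k<n .
\]
This is precisely condition (ii) read off from the left injective resolution of $\Lambda$ rather than the right one. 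Thus the whole theorem reduces to the \emph{left--right symmetry} of the property ``$\pdim I_k(\Lambda)\le k$ for $0\le k<n$''.

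To prove this symmetry I would pass to a grade characterization in the spirit of Auslander and Reiten. Setting $\grade M=\inf\{\,i\mid \Ext^i_\Lambda(M,\Lambda)\ne 0\,\}$, the point is that the property ``$\pdim I_k\le k$ for $0\le k<n$'', computed on a fixed side, is equivalent to a grade condition of the form $\grade \Ext^i_\Lambda(M,\Lambda)\ge i$ holding for every finitely generated module $M$ and every $i$ in a range controlled by $n$. Such a condition is visibly symmetric: for a right module $M$ the bimodule structure of $\Lambda$ makes $\Ext^i_\Lambda(M,\Lambda)$ a left module, and a biduality spectral sequence of the form
\[
\Ext^p_{\Lambda\op}\!\bigl(\Ext^q_\Lambda(M,\Lambda),\Lambda\bigr)\Rightarrow M
\]
exchanges the two sides while respecting the relevant grade bounds. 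Feeding the symmetric grade condition back through the equivalence on each side yields (i) $\Longleftrightarrow$ (ii). Since the property in question is exactly the $n$-Gorenstein (Auslander) condition, one may also simply invoke the left--right symmetry theorem of \cite{AR96}.

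The main obstacle is the symmetry step itself. The duality reduction is formal, whereas matching ``$\pdim I_k\le k$ for $0\le k<n$'' with the grade condition, and tracking how the biduality spectral sequence behaves in the required range of indices, is the delicate part. It is here that the tightened bound $\le k$ is crucial: the weaker bound $\le k+1$ appearing in Theorem \ref{When d-syzygies are extension closed} governs extension-closedness of the syzygy categories and is \emph{not} left--right symmetric, as remarked after that theorem.
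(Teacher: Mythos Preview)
Your proposal is correct and is essentially what the paper does, except that the paper gives no argument at all: its entire proof is the single sentence ``This follows from \cite[Theorem 3.7]{FGR75}.'' Your duality reduction turning (i) into the left-side version of (ii), followed by the left--right symmetry of the $n$-Gorenstein (Auslander) condition via the grade characterisation and the biduality spectral sequence, is precisely the content of that cited theorem of Fossum--Griffith--Reiten; note only that the relevant reference here is \cite{FGR75} rather than \cite{AR96}.
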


\begin{proof}
This follows from \cite[Theorem 3.7]{FGR75}.
\end{proof}

We now show that \emphbf{Conjecture} \ref{Conjecture} implies \emphbf{NC}

\begin{Proposition}\label{lemma implies Nakayama lemma}
The following holds:
\begin{enumerate}
\item\label{Conjecture implies Nakayama lemma part (i)} If $\domdim \Lambda = \infty$ and \emphbf{Conjecture} \ref{Conjecture} holds, then $\Lambda$ is right Co-Go\-renstein;
\item\label{Conjecture implies Nakayama lemma part (ii)} If $\domdim \Lambda = \infty$ and $\Lambda$ is right Co-Gorenstein, then $\Lambda$ is selfinjective; 
\item\label{Conjecture implies Nakayama lemma part (iii)} \emphbf{Conjecture} \ref{Conjecture} implies \emphbf{NC}.
\end{enumerate}
\end{Proposition}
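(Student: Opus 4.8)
The plan is to deduce part \ref{Conjecture implies Nakayama lemma part (iii)} formally from parts \ref{Conjecture implies Nakayama lemma part (i)} and \ref{Conjecture implies Nakayama lemma part (ii)}: if \textbf{Conjecture} \ref{Conjecture} holds and $\domdim \Lambda = \infty$, then \ref{Conjecture implies Nakayama lemma part (i)} makes $\Lambda$ right Co-Gorenstein and \ref{Conjecture implies Nakayama lemma part (ii)} then forces $\Lambda$ to be selfinjective, which is precisely the content of \textbf{NC}. So the real work lies in the first two parts.

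For part \ref{Conjecture implies Nakayama lemma part (i)} I would only need to verify the hypothesis of \textbf{Conjecture} \ref{Conjecture}, namely that $\Omega^n(\md\text{-}\Lambda)$ is extension closed for every $n \geq 1$. Here I would invoke Theorem \ref{When d-syzygies are extension closed}: the extension-closedness of $\Omega^k(\md\text{-}\Lambda)$ for $1 \leq k \leq n$ is equivalent to the numerical condition $\idim P_k(D\Lambda) \leq k+1$ for $0 \leq k < n$. Since $\domdim \Lambda = \infty$ means, by the criterion recalled in the introduction, that every $P_k(D\Lambda)$ is injective, we have $\idim P_k(D\Lambda) = 0 \leq k+1$ for all $k$, so the condition holds for every $n$. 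Thus all syzygy classes $\Omega^n(\md\text{-}\Lambda)$ are extension closed, and \textbf{Conjecture} \ref{Conjecture} yields that $\Lambda$ is right Co-Gorenstein.

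Part \ref{Conjecture implies Nakayama lemma part (ii)} is the crux. The key observation I would exploit is that $\domdim \Lambda = \infty$ gives a minimal injective resolution $0 \to \Lambda \to I_0(\Lambda) \to I_1(\Lambda) \to \cdots$ in which every term is projective-injective. Truncating after the first step, the tail $0 \to C \to I_1(\Lambda) \to I_2(\Lambda) \to \cdots$, where $C = \Coker(\Lambda \to I_0(\Lambda))$, is an exact coresolution of $C$ by projective modules, so by definition $C \in \Omega^\infty(\md\text{-}\Lambda)$. The right Co-Gorenstein hypothesis, via Corollary \ref{Alternative description Co-Gorenstein}, gives $\Omega^\infty(\md\text{-}\Lambda) \subseteq {}^{{}_1\perp}\Lambda$, hence $\Ext^1_\Lambda(C, \Lambda) = 0$. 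Applying this to the short exact sequence $0 \to \Lambda \to I_0(\Lambda) \to C \to 0$ shows that it splits, so $\Lambda$ is a direct summand of the injective module $I_0(\Lambda)$ and is therefore injective as a right module; that is, $\Lambda$ is selfinjective.

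The step I expect to be the main obstacle — or at least the one needing the right idea — is locating a module of interest inside $\Omega^\infty(\md\text{-}\Lambda)$ in part \ref{Conjecture implies Nakayama lemma part (ii)}. The naive candidates, the syzygies of $D\Lambda$, do not obviously lie in $\Omega^\infty(\md\text{-}\Lambda)$, because the projective coresolution one reads off from their defining sequences terminates at the (generally non-projective) module $D\Lambda$. The trick is to work instead with the cosyzygies of $\Lambda$: it is precisely the equality $\domdim \Lambda = \infty$ that makes the terms of the injective resolution of $\Lambda$ projective, and this is exactly what is needed to exhibit those cosyzygies as genuine infinite syzygies. Once $C \in \Omega^\infty(\md\text{-}\Lambda)$ is in hand, the $\Ext$-vanishing and the resulting splitting are routine.
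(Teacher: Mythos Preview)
Your proposal is correct and matches the paper's approach almost exactly: part \ref{Conjecture implies Nakayama lemma part (ii)} is argued identically (put $C=\Coker(\Lambda\to I_0(\Lambda))$ into $\Omega^\infty(\md\text{-}\Lambda)$ using $\domdim\Lambda=\infty$, then split the sequence via Corollary \ref{Alternative description Co-Gorenstein}), and part \ref{Conjecture implies Nakayama lemma part (iii)} is the same formal deduction. The only difference is in part \ref{Conjecture implies Nakayama lemma part (i)}: the paper cites Theorem \ref{When d-syzygies are extension closed} together with Theorem \ref{Left-right symmetric} (reasoning $\domdim\Lambda=\infty \Rightarrow \pdim I_k(\Lambda)=0 \Rightarrow \idim P_k(D\Lambda)\leq k$), whereas you bypass Theorem \ref{Left-right symmetric} by using the Hoshino criterion from the introduction to get $\idim P_k(D\Lambda)=0$ directly --- a slightly shorter path to the same place.
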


\begin{proof}
Part \ref{Conjecture implies Nakayama lemma part (i)} follows from Theorem \ref{When d-syzygies are extension closed} and Theorem \ref{Left-right symmetric}. We prove part \ref{Conjecture implies Nakayama lemma part (ii)}. Let $i\colon \Lambda \to I_0(\Lambda)$ denote the injective envelope. We have exact sequences 
\begin{equation}\label{sequence 1}
0\to \Lambda \xrightarrow{i} I_0(\Lambda)\to \Coker i\to 0
\end{equation}
and
\begin{equation}\label{sequence 2}
0\to \Coker i \to I_1(\Lambda)\to I_2(\Lambda)\to \cdots.
\end{equation}
Note that $\Coker i \in \Omega^{\infty}(\md\text{-}\Lambda)$ if $\domdim \Lambda = \infty$. Furthermore, $\Lambda$ is selfinjective if and only if the sequence \eqref{sequence 1} is split, and this holds if $\Coker i\in {}^{\perp}\Lambda$. By Corollary \ref{Alternative description Co-Gorenstein}, this proves part \ref{Conjecture implies Nakayama lemma part (ii)}. Part \ref{Conjecture implies Nakayama lemma part (iii)} follows from part \ref{Conjecture implies Nakayama lemma part (i)} and \ref{Conjecture implies Nakayama lemma part (ii)}.
\end{proof}

We now show that \emphbf{GNC} implies \emphbf{Conjecture} \ref{Conjecture}.

\begin{Proposition}\label{finite injective dimension}
The following holds:
\begin{enumerate}
\item\label{finite injective dimension (i)} Suppose the \emphbf{GNC} holds. If $\Omega^n(\md\text{-}\Lambda)$ is extension closed for all $n\geq 1$, then $\idim \Lambda < \infty$ as a right $\Lambda$-module;
\item\label{finite injective dimension (ii)} If $\idim \Lambda < \infty$ as right $\Lambda$-module, then $\Lambda$ is right Co-\-Gorenstein;
\item\label{finite injective dimension (iii)} \emphbf{GNC} implies \emphbf{Conjecture} \ref{Conjecture}.
\end{enumerate}

\end{Proposition}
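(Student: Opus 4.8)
The plan is to prove Proposition \ref{finite injective dimension} by establishing its three parts in order, with parts (i) and (ii) doing the real work and (iii) being a formal consequence.

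For part \ref{finite injective dimension (i)}, I would start from the hypothesis that $\Omega^n(\md\text{-}\Lambda)$ is extension-closed for all $n\geq 1$. By Theorem \ref{When d-syzygies are extension closed} (the equivalence of (i) and (iii)), this gives $\idim P_k(D\Lambda)\leq k+1$ for all $k\geq 0$. The goal is to upgrade this to a bound of the form $\idim P_k(D\Lambda)\leq k$, because Theorem \ref{Left-right symmetric} then translates injective-dimension bounds on the $P_k(D\Lambda)$ into projective-dimension bounds $\pdim I_k(\Lambda)\leq k$ on the injective coresolution terms of $\Lambda$, and a uniform such bound forces $\idim\Lambda<\infty$ as a right module. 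The role of \textbf{GNC} is precisely to close this gap: since every indecomposable projective $P$ is a summand of some $P_n(D\Lambda)$, the summands of the $P_k(D\Lambda)$ exhaust all projectives, and combined with the extension-closure hypothesis this should eliminate the extra $+1$ in the injective dimension estimate. Concretely, I expect to argue that if some projective summand had injective dimension exactly $k+1$ rather than $\leq k$, one could iterate to produce projectives of arbitrarily large (or nonzero but bounded-away) injective dimension; \textbf{GNC} guarantees every projective appears, so controlling the injective dimensions of the $P_k(D\Lambda)$ controls the injective dimension of $\Lambda=\bigoplus$(indecomposable projectives) itself.

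For part \ref{finite injective dimension (ii)}, suppose $\idim\Lambda<\infty$ as a right module, say $\idim\Lambda=d$. I would verify the characterization of right Co-Gorenstein recorded just after Definition \ref{Definition right Co-Gorenstein}, namely $\Omega^{\infty}(\md\text{-}\Lambda)\subset {}^{\perp}\Lambda$. Take $M\in\Omega^{\infty}(\md\text{-}\Lambda)$; then $M$ is an arbitrarily high syzygy, so for each $i\geq 1$ we may write $M=\Omega^{m}(N)$ for $m$ as large as we like. Using dimension shifting, $\Ext^i_{\Lambda}(M,\Lambda)\cong \Ext^{i+m}_{\Lambda}(N,\Lambda)$, and once $i+m>d=\idim\Lambda$ this vanishes. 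Hence $\Ext^i_{\Lambda}(M,\Lambda)=0$ for all $i\geq 1$, so $M\in {}^{\perp}\Lambda$, giving $\Omega^{\infty}(\md\text{-}\Lambda)\subset {}^{\perp}\Lambda$ and thus right Co-Gorenstein by Corollary \ref{Alternative description Co-Gorenstein}.

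Finally, part \ref{finite injective dimension (iii)} is immediate: assuming \textbf{GNC}, if $\Omega^n(\md\text{-}\Lambda)$ is extension-closed for all $n\geq 1$, then part \ref{finite injective dimension (i)} yields $\idim\Lambda<\infty$, and then part \ref{finite injective dimension (ii)} yields that $\Lambda$ is right Co-Gorenstein, which is exactly the conclusion of \textbf{Conjecture} \ref{Conjecture}. The main obstacle I anticipate is part \ref{finite injective dimension (i)}: the passage from $\idim P_k(D\Lambda)\leq k+1$ to the sharper bound needed to apply Theorem \ref{Left-right symmetric} is where \textbf{GNC} must be invoked nontrivially, and it is the one place where I would need to think carefully about how "every projective is a summand of some $P_n(D\Lambda)$" interacts with the injective-dimension estimates rather than just quoting the two cited structural theorems.
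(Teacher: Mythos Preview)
Your plans for parts \ref{finite injective dimension (ii)} and \ref{finite injective dimension (iii)} are correct and essentially match the paper's proof. (The paper phrases the dimension shift in part \ref{finite injective dimension (ii)} via the truncated coresolution $0\to M\to P_1\to\cdots\to P_s\to K\to 0$ and $\Ext^i_\Lambda(M,\Lambda)\cong\Ext^{i+s}_\Lambda(K,\Lambda)$, but this is the same argument as yours, just with the roles of ``source'' and ``target'' in the shifting made explicit.)

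Your plan for part \ref{finite injective dimension (i)}, however, takes an unnecessary detour and contains a genuine gap. You propose to (a) upgrade the bound $\idim P_k(D\Lambda)\leq k+1$ to $\idim P_k(D\Lambda)\leq k$, (b) invoke Theorem \ref{Left-right symmetric} to obtain $\pdim I_k(\Lambda)\leq k$ for all $k$, and then (c) conclude $\idim\Lambda<\infty$. Step (c) does not follow: the bounds $\pdim I_k(\Lambda)\leq k$ are not uniform in $k$, and they do not by themselves force the minimal injective coresolution of $\Lambda$ to terminate. Indeed, the special case $\pdim I_k(\Lambda)=0$ for all $k$ is precisely the hypothesis $\domdim\Lambda=\infty$, and concluding selfinjectivity from that is the Nakayama Conjecture itself. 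Step (a) is also not something \textbf{GNC} delivers in any evident way.

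The paper's argument is the direct one you gesture at in your final sentence but then abandon: write $\Lambda=Q_0\oplus\cdots\oplus Q_m$ with each $Q_i$ indecomposable projective. By \textbf{GNC}, each $Q_i$ is a summand of some $P_{s_i}(D\Lambda)$, hence $\idim Q_i\leq \idim P_{s_i}(D\Lambda)\leq s_i+1$ by Theorem \ref{When d-syzygies are extension closed}. Setting $s=\max_i s_i+1$ gives $\idim\Lambda\leq s<\infty$ immediately. Neither the sharpening to $\leq k$ nor Theorem \ref{Left-right symmetric} is needed here.
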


\begin{proof}
By Theorem \ref{When d-syzygies are extension closed} we have that $\idim P_n(D\Lambda)\leq n+1$ for all $n\geq 0$. Now write $\Lambda = P_0 \oplus \cdots \oplus P_m$ as a sum of indecomposable projective $\Lambda$-modules. Since \emphbf{GNC} holds, there exists integers $s_0,s_1,\cdots,s_m$ such that $P_i$ is a direct summands of $P_{s_i}(D\Lambda)$. Let $s:=\max\{s_0,\cdots ,s_m\}+1$. Then $\idim P_i\leq \idim P_{s_i}(D\Lambda)\leq s_i+1 \leq s$ for all $0\leq i\leq m$. Hence, it follows that $\idim \Lambda \leq s < \infty$. 

For part \ref{finite injective dimension (ii)}, assume $\idim \Lambda=s$, and let $M\in \Omega^{\infty}(\md\text{-}\Lambda)$. Then there exists an exact sequence
\[
0\to M\to P_1\to \cdots \to P_s\to K\to 0
\]  
in $\md\text{-}\Lambda$ with $P_i$ projective. It follows by dimension shifting that 
\[
\Ext^i_{\Lambda}(M,\Lambda)\cong \Ext^{i+s}_{\Lambda}(K,\Lambda).
\]
Since $\idim \Lambda=s$, we have $\Ext^{i+s}_{\Lambda}(K,\Lambda)=0$ for all $i\geq 1$. This shows that $M\in {}^{\perp}\Lambda$, and hence $\Lambda$ is Co-Gorenstein by Corollary \ref{Alternative description Co-Gorenstein}. Part \ref{finite injective dimension (iii)} follows immediately from part \ref{finite injective dimension (i)} and \ref{finite injective dimension (ii)}.
\end{proof}

\begin{Remark}
In Propositon \ref{finite injective dimension} part \ref{finite injective dimension (ii)} we actually prove that 
\[
\cap_{n\geq 1} \Omega^{n}(\md\text{-}\Lambda)\subseteq {}^{\perp}\Lambda.
\]
However, under the assumption that $\Omega^n(\md\text{-}\Lambda)$ is extension closed for all $n\geq 1$, we have that 
\[
\Omega^{\infty}(\md\text{-}\Lambda) = \cap_{n\geq 1} \Omega^{n}(\md\text{-}\Lambda).\footnote{Beligiannis claims this holds without any extra assumptions on $\Lambda$, see the paragraph before Theorem 3.17 in \cite{Bel00}. The authors do not see why this is true.}
\]
In fact, by \cite[Theorem 1.7 part b) and c)]{AR96} we get that the modules in $\cap_{n\geq 1} \Omega^{n}(\md\text{-}\Lambda)$ can be identified with the modules which are $n$-torsion free for all $n$, and it is easy to see that these modules are contained in $\Omega^{\infty}(\md\text{-}\Lambda)$. 
\end{Remark}

\bibliography{Mybibtex}
\bibliographystyle{plain} 

\end{document}